\newcommand{\Qp}{\mathbf{Q}_p}
\newcommand{\Zp}{\mathbf{Z}_p}
\newcommand{\Cp}{\mathbf{C}_p}
\newcommand{\Kalg}{\overline{K}}
\newcommand{\ZZ}{\mathbf{Z}}
\newcommand{\QQ}{\mathbf{Q}}
\newcommand{\Qbar}{\overline{\mathbf{Q}}}
\newcommand{\OO}{\mathcal{O}}
\newcommand{\bigO}{\mathrm{O}}
\newcommand{\MM}{\mathfrak{m}}
\newcommand{\End}{\operatorname{End}}
\newcommand{\Id}{\operatorname{Id}}
\newcommand{\Hom}{\operatorname{Hom}}
\newcommand{\Gal}{\operatorname{Gal}}
\newcommand{\GL}{\operatorname{GL}}
\newcommand{\Gm}{\mathbf{G}_m}
\newcommand{\Dyn}{\mathcal{D}}
\newcommand{\Tors}{\operatorname{Tors}}
\newcommand{\preper}{\operatorname{Preper}}
\newcommand{\Log}{\mathrm{Log}}
\newcommand{\dcroc}[1]{[\![ #1 ]\!]}
\renewcommand{\geq}{\geqslant}
\renewcommand{\leq}{\leqslant} 
\author{Laurent Berger}
\address{UMPA de l'ENS de Lyon \\
UMR 5669 du CNRS}
\email{laurent.berger@ens-lyon.fr}
\urladdr{perso.ens-lyon.fr/laurent.berger/}
\date{\today}
\title[Rigidity and unlikely intersections for formal groups]{Rigidity and unlikely intersections \\ for formal groups}
\begin{document}

\begin{abstract}
Let $K$ be a $p$-adic field and let $F$ and $G$ be two formal groups over $\OO_K$. We prove that if $F$ and $G$ have infinitely many torsion points in common, then $F=G$. This follows from a rigidity result: any bounded power series that sends infinitely many torsion points of $F$ to torsion points of $F$ is an endomorphism of $F$.
\end{abstract}

\subjclass{11S31 (11F80; 11S82; 13J05; 37P35)}

\keywords{Rigidity; unlikely intersections; formal groups; $p$-adic dynamical systems; preperiodic points}

\maketitle

\tableofcontents

\setlength{\baselineskip}{18pt}

\section*{Introduction}

Let $K$ be a finite extension of $\Qp$ (or, more generally, a finite extension of $W(k)[1/p]$ where $k$ is a perfect field of characteristic $p$). Let $\Kalg$ be an algebraic closure of $K$ and let $\Cp$ be the $p$-adic completion of $\Kalg$. Let $\OO_K$ denote the ring of integers of $K$, and let $F(X,Y) = X \oplus Y \in \OO_K \dcroc{X,Y}$ be a formal group law over $\OO_K$. Let $\Tors(F)$ be the set of torsion points of $F$ in $\MM_{\Cp}= \{ z \in \Cp,\ |z|_p<1\}$. The question that motivates this paper is: to what extent is a formal group $F$ determined by $\Tors(F)$?  Our main result is an ``unlikely intersections'' result. 

\begin{enonce*}{Theorem A}
If $F$ and $G$ are two formal groups over $\OO_K$ and if $\Tors(F) \cap \Tors(G)$ is infinite, then $F=G$.
\end{enonce*}

If $n \geq 2$ and $[n](X)$ denotes the multiplication by $n$ map on $F$, then $\Tors(F)$ is also the set of preperiodic points of $[n](X)$ in $\MM_{\Cp}$. We can therefore think of $\Tors(F)$ as the set $\preper(F)$ of preperiodic points of a $p$-adic dynamical system attached to $F$. Theorem A then becomes a statement about preperiodic points of certain dynamical systems.

Some analogues of theorem A are known in other contexts. For example, if two elliptic curves over $\Qbar$ have infinitely many torsion points in common (in a suitable sense), then they are isomorphic (Bogomolov and Tschinkel, see \S 4 of \cite{BT07}). In another context, if $f$ and $g$ are two rational fractions of degree at least $2$ with coefficients in the complex numbers, and if $\preper(f) \cap \preper(g)$ is infinite, then $\preper(f)=\preper(g)$ (Baker and DeMarco, theorem 1.2 of \cite{BDM11}). In this case, $f$ and $g$ have the same Julia set (corollary 1.3 of ibid.). One can then show that, if $f$ and $g$ are polynomials of the same degree, then in most cases they are equal up to a linear symmetry that preserves their common Julia set (see for instance \cite{BE87} and \cite{SS95}). 

Our proof of theorem A relies on a rigidity result for formal groups. We say that a subset $Z \subset \MM_{\Cp}^d$ is Zariski dense  in $\MM_{\Cp}^d$ if every power series $h(X_1,\hdots,X_d) \in \OO_K \dcroc{X_1,\hdots,X_d}$ that vanishes on $Z$ is necessarily equal to zero. For example, if $d=1$ then $Z \subset \MM_{\Cp}$ is Zariski dense in $\MM_{\Cp}$ if and only if it is infinite.

\begin{enonce*}{Theorem B}
If $F$ is a formal group over $\OO_K$ and if $h(X) \in X \cdot \OO_K \dcroc{X}$ is such that $h(z) \in \Tors(F)$ for infinitely many $z \in \Tors(F)$, then $h \in \End(F)$.

More generally, if $h(X_1,\hdots,X_d) \in \OO_K \dcroc{X_1,\hdots,X_d}$ is such that $h(0)=0$ and $h(z) \in \Tors(F)$ for all $z$ in a subset of $\Tors(F)^d$ that is  Zariski dense in $\MM_{\Cp}^d$, then there exists $h_1,\hdots,h_d \in \End(F)$ such that $h=h_1(X_1) \oplus \cdots \oplus h_d(X_d)$.
\end{enonce*}

This theorem generalizes corollary 4.2 of Hida's \cite{H14}, which concerns the case $F=\Gm$. Our proof uses ideas coming from the theory of $p$-adic dynamical systems (developed in large part by Lubin, see \cite{L94}) rather than the ``special subvarieties'' argument of Hida (which is in the spirit of Chai's \cite{C08}). Other kinds of ``unlikely intersections'' results for certain formal groups can be found in \cite{S16}.

\vspace{\baselineskip}\noindent\textbf{Acknowledgements.} This paper is motivated by Holly Krieger's talk ``A dynamical approach to common torsion points'' at the CNTA XV meeting. I am grateful to Holly Krieger for useful discussions and to the organizers of CNTA XV for inviting me.

\section{Formal groups}
\label{fgsec}

For the basic definitions and results about formal groups that we need, we refer for instance to Lubin's \cite{L64,L67}. Let $F(X,Y) = X \oplus Y \in \OO_K \dcroc{X,Y}$ be a formal group law over $\OO_K$. If $n \in \ZZ$, let $[n](X)$ denote the multiplication by $n$ map on $F$. More generally, if $a \in \OO_K$, let $[a](X)$ be the unique endomorphism of $F$ such that $[a]'(0)=a$ if it exists. Let $\Tors(F)$ be the set of torsion points of $F$. If $F$ is of finite height, then $\Tors(F)$ is infinite, while if $F$ is of infinite height, then $\Tors(F)$ is finite and our results are vacuous. We therefore assume from now on that $F$ is of finite height $h$.

Let $T_p F = \varprojlim_n F[p^n]$ be the Tate module of $F$. If $F$ is of height $h$, then $T_p F$ is a free $\Zp$-module of rank $h$, equipped with an action of $\Gal(\Kalg/K)$. If we choose a basis of $T_p F$, this gives a Galois representation $\rho_F : \Gal(\Kalg/K) \to \GL_h(\Qp)$. Let $E$ be the fraction field of $\End(F)$. It is a finite extension of $\Qp$ whose degree $e$ divides $h$ (theorem 2.3.2 of \cite{L64}), so that we can view $\GL_{h/e}(E)$ as a subgroup of $\GL_h(\Qp)$.

\begin{theo}
\label{serresurj}
The image of $\rho_F$ is  an open subgroup of a conjugate of $\GL_{h/e}(E)$.
\end{theo}

\begin{proof}
This is an unpublished theorem of Serre (see however the remark after theorem 5 on page 130 of \cite{S67}), which is proved in \cite{S73} (see theorem 3 on page 168 and the remark that follows).
\end{proof}

\begin{coro}
\label{scalopen}
The image of $\rho_F$ contains an open subgroup of $\Zp^\times \cdot \Id$. 
\end{coro}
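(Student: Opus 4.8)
The plan is to read this off directly from Theorem~\ref{serresurj}. That theorem provides some $g \in \GL_h(\Qp)$ for which $\mathrm{Im}(\rho_F)$ is an open subgroup of $H := g\,\GL_{h/e}(E)\,g^{-1}$. The key observation is that the homotheties $\lambda \cdot \Id$ with $\lambda \in \Qp^\times$ lie in $\GL_{h/e}(E)$ (because $\Qp \subseteq E$) and are central in $\GL_h(\Qp)$, hence fixed under conjugation by $g$. Therefore $\Qp^\times \cdot \Id \subseteq H$, and in particular $\Zp^\times \cdot \Id$ is a compact subgroup of $H$.

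Since $\mathrm{Im}(\rho_F)$ is open in $H$, it contains an open neighbourhood of $\Id$ in $H$, so the intersection $\mathrm{Im}(\rho_F) \cap (\Zp^\times \cdot \Id)$ is open in $\Zp^\times \cdot \Id$ for the subspace topology; it is moreover a subgroup, being the intersection of two subgroups both containing $\Id$. This intersection is the sought-after open subgroup of $\Zp^\times \cdot \Id$ contained in $\mathrm{Im}(\rho_F)$, which proves the corollary.

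I do not expect any genuine obstacle here: once Theorem~\ref{serresurj} is granted, the only points requiring (minimal) care are that the conjugating matrix $g$ need not normalize all of $\GL_{h/e}(E)$ but does centralize its scalar matrices, and that openness in $H$ restricts to openness in the closed subgroup $\Zp^\times \cdot \Id$. If one prefers, the choice of $g$ can be dispensed with entirely, since $\Zp^\times \cdot \Id$ is central in $\GL_h(\Qp)$ and hence lies in every conjugate of $\GL_{h/e}(E)$.
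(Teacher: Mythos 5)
Your argument is correct and is exactly the (implicit) deduction the paper intends: the corollary is stated without proof as an immediate consequence of Theorem~\ref{serresurj}, using that $\Zp^\times \cdot \Id$ is central in $\GL_h(\Qp)$, hence lies in every conjugate of $\GL_{h/e}(E)$, and that an open subgroup of that conjugate meets $\Zp^\times \cdot \Id$ in an open subgroup. Your handling of the conjugation and of the restriction of openness is exactly the minimal care needed; nothing is missing.
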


If $\sigma \in \Gal(\Kalg/K)$ is such that $\rho_F(\sigma) = a \cdot \Id$, then $\sigma(z)=[a](z)$ for all $z \in \Tors(F)$.

\section{$p$-adic dynamical systems}
\label{dssec}

In this {\S}, we prove a number of results about power series that commute under composition (sometimes also called permutable power series). These results are all inspired by Lubin's theory of $p$-adic dynamical systems (see \cite{L94}).

A power series $h(X) \in X \cdot K \dcroc{X}$ is said to be stable if $h'(0)$ is neither $0$ nor a root of unity. If $h'(0) \neq 0$, then there exists a unique power series $h^{\circ -1}(X) \in X \cdot K \dcroc{X}$ such that $h \circ h^{\circ -1} = h^{\circ -1} \circ h = X$. If in addition $h(X) \in X \cdot \OO_K \dcroc{X}$ and $h'(0) \in \OO_K^\times$, then $h^{\circ -1}(X) \in  X \cdot \OO_K \dcroc{X}$.

\begin{theo}
\label{comgen}
Let $u(X) \in X \cdot K \dcroc{X}$ be a stable power series. 

A power series $h(X_1,\hdots,X_d) \in K \dcroc{X_1,\hdots,X_d}$ such that $h(0)=0$ and such that $h \circ u = u \circ h$ is determined by $\{dh/dX_i(0) \}_{1 \leq i \leq d}$.
\end{theo}

\begin{proof}
Suppose that $h^{(1)}$ and $h^{(2)}$ are two such power series, and that they coincide in degrees $\leq m$. Let $h_m$ be the sum of the terms of $h^{(i)}$ of total degree $\leq m$. We have $h^{(i)} = h_m + r^{(i)}$ with $r^{(i)}$ of degree $\geq m+1$, and  
\[ \begin{cases} (h_m+r^{(i)}) \circ u = h_m \circ u + r^{(i)} \circ u \equiv h_m \circ u + u'(0)^{m+1} r^{(i)}  \bmod{\deg\ (m+2)}, \\ 
u \circ (h_m + r^{(i)}) \equiv u \circ h_m + r^{(i)} u'(h_m) \equiv u \circ h_m + r^{(i)} u'(0)  \bmod{\deg\ (m+2)}. \end{cases} \]
Since $u'(0)^m \neq 1$, the fact that $h^{(i)} \circ u = u \circ h^{(i)}$ implies that 
\[ r^{(i)} \equiv \frac{h_m \circ u - u \circ h_m}{u'(0)-u'(0)^{m+1}} \bmod{\deg\ (m+2)}. \] 
If $h^{(1)}$ and $h^{(2)}$ coincide in degrees $\leq m$, they therefore have to coincide in degrees $\leq m+1$. This implies the theorem by induction on $m$.
\end{proof}

Let us say that an endomorphism of a formal group is stable if the corresponding power series is stable.

\begin{coro}
\label{isendo}
Let $F$ be a formal group and let $u$ be a stable endomorphism of $F$. If $h(X) \in X \cdot \OO_K \dcroc{X}$ is such that $h \circ u = u \circ h$, then $h$ is an endomorphism of $F$.
\end{coro}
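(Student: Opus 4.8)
The plan is to deduce this from Theorem~\ref{comgen} applied with $d = 2$. Consider the two power series in $\OO_K \dcroc{X,Y}$ given by $H_1(X,Y) = h(X \oplus Y)$ and $H_2(X,Y) = h(X) \oplus h(Y)$; proving $h \in \End(F)$ amounts exactly to proving $H_1 = H_2$. Both satisfy $H_i(0,0) = 0$, since $h \in X \cdot \OO_K \dcroc{X}$ and $0 \oplus 0 = 0$. I claim that both $H_1$ and $H_2$ commute with $u$ in the sense of Theorem~\ref{comgen}, i.e.\ $H_i(u(X),u(Y)) = u(H_i(X,Y))$. For $H_1$ this identity reads $h(u(X) \oplus u(Y)) = u(h(X \oplus Y))$, which follows by first using that $u$ is an endomorphism of $F$, so that $u(X) \oplus u(Y) = u(X \oplus Y)$, and then applying $h \circ u = u \circ h$. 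For $H_2$ it reads $h(u(X)) \oplus h(u(Y)) = u(h(X) \oplus h(Y))$, which follows by first applying $h \circ u = u \circ h$ to each of the two factors and then using again that $u$ is an endomorphism of $F$.

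Next I would compute the relevant first partial derivatives at the origin. Since $X \oplus Y \equiv X + Y \bmod \deg 2$, the chain rule gives $dH_1/dX(0) = dH_1/dY(0) = h'(0)$, and likewise, since the linear part of $\oplus$ in each variable is the identity, $dH_2/dX(0) = dH_2/dY(0) = h'(0)$. Thus $H_1$ and $H_2$ are two power series vanishing at the origin, commuting with the stable power series $u$, and having the same first partial derivatives at the origin; by Theorem~\ref{comgen} they coincide. Hence $h(X \oplus Y) = h(X) \oplus h(Y)$, which is precisely the assertion $h \in \End(F)$.

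There is no real obstacle beyond this bookkeeping: the substantive content has already been isolated in Theorem~\ref{comgen}, and the only point requiring care is that establishing the two commutation relations $H_i \circ u = u \circ H_i$ uses \emph{both} hypotheses on $u$, namely that $u$ commutes with $h$ and that $u$ is an endomorphism of $F$ (i.e.\ commutes with the formal group law). The argument is also unaffected by the degenerate case $h'(0) = 0$, in which it simply forces $h = 0 \in \End(F)$.
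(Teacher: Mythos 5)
Your proof is correct and is essentially the paper's own argument: the paper also compares the two-variable series $h \circ F$ and $F \circ (h,h)$ (your $H_1$ and $H_2$), notes both commute with $u$ and have the same derivatives at $0$, and concludes by Theorem~\ref{comgen}; you have merely written out the verifications the paper leaves implicit.
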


\begin{proof}
The power series $F \circ h$ and $h \circ F$ both commute with $u$, and have the same derivatives at $0$, so that $F \circ h = h \circ F$ by theorem \ref{comgen}.
\end{proof}

\begin{coro}
\label{comfg}
If $u$ is a stable endomorphism of a formal group and if $h(X_1,\hdots,X_d) \in \OO_K \dcroc{X_1,\hdots,X_d}$ is such that $h(0)=0$ and $h \circ u = u \circ h$, then there exists $a_1\hdots,a_d \in \OO_K$ such that $h(X_1,\hdots,X_d) = [a_1](X_1) \oplus \cdots \oplus [a_d](X_d)$.
\end{coro}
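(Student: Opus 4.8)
The plan is to reduce to the one-variable statement (Corollary~\ref{isendo}) by restricting $h$ to each coordinate axis, and then to reconstruct $h$ from these restrictions using the uniqueness in Theorem~\ref{comgen}.

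Concretely, I would first introduce, for each $1 \leq i \leq d$, the one-variable series $h_i(X) = h(0,\dots,0,X,0,\dots,0)$ with $X$ in the $i$-th slot; since $h(0)=0$ and the coefficients of $h_i$ are among those of $h$, it lies in $X \cdot \OO_K \dcroc{X}$. As $u$ is an endomorphism of $F$ we have $u(0)=0$, so substituting $X_j = 0$ for all $j \neq i$ into the identity $h(u(X_1),\dots,u(X_d)) = u(h(X_1,\dots,X_d))$ yields $h_i \circ u = u \circ h_i$. Corollary~\ref{isendo} then applies (the hypothesis that $u$ is stable is given) and gives $h_i \in \End(F)$; setting $a_i = h_i'(0) \in \OO_K$, the uniqueness in the definition of $[a_i]$ forces $h_i = [a_i]$, and in particular $[a_i]$ exists.

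Next I would check that $g(X_1,\dots,X_d) := [a_1](X_1) \oplus \cdots \oplus [a_d](X_d)$ enjoys the same three properties as $h$. It vanishes at $0$. It commutes with $u$: since $\End(F)$ is a subring of the field $E$ it is commutative, so $[a_i] \circ u = u \circ [a_i]$; combining this with the fact that the endomorphism $u$ respects $\oplus$, one computes $g(u(X_1),\dots,u(X_d)) = u([a_1](X_1)) \oplus \cdots \oplus u([a_d](X_d)) = u(g(X_1,\dots,X_d))$. And since $[a_i](X) = a_i X + \cdots$ while $X \oplus Y = X + Y + \cdots$ (higher-order terms), one has $dg/dX_i(0) = a_i = h_i'(0) = dh/dX_i(0)$. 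Thus $h$ and $g$ both vanish at $0$, both commute with the stable series $u$, and have the same first-order partial derivatives at the origin, so Theorem~\ref{comgen} gives $h = g$, which is the claim.

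There is no serious obstacle here, since the argument merely assembles results already proved; the only points needing a little care are that an endomorphism is pinned down by its derivative at $0$ (so that ``$h_i \in \End(F)$ with $h_i'(0) = a_i$'' genuinely yields $h_i = [a_i]$ with $a_i \in \OO_K$), and that the diagonal sum $g$ commutes with $u$ exactly and not merely modulo higher-order terms — which is precisely where the commutativity of $\End(F)$ and the additivity of $u$ are used.
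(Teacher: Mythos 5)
Your proof is correct and follows essentially the same route as the paper: restrict $h$ to each coordinate axis, apply Corollary~\ref{isendo} to get $h_i=[a_i]$, and then compare $h$ with $[a_1](X_1)\oplus\cdots\oplus[a_d](X_d)$ via the uniqueness statement of Theorem~\ref{comgen}. The extra details you supply (that $u(0)=0$ justifies the restriction, that commutativity of $\End(F)$ and additivity of $u$ give the exact commutation of the diagonal sum with $u$) are exactly the points the paper leaves implicit.
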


\begin{proof}
Let $h_i(X)$ be the power series $h$ evaluated at $X_i=X$ and $X_k=0$ for $k \neq i$. We have $h_i \circ u = u \circ h_i$ and hence by corollary \ref{isendo}, $h_i(X)=[a_i](X)$ where $a_i = h_i'(0) \in \OO_K$. The two power series $h(X_1,\hdots,X_d)$ and $[a_1](X_1) \oplus \cdots \oplus [a_d](X_d)$ commute with $u$ and have the same derivatives at $0$, so that they are equal by theorem \ref{comgen}.
\end{proof}

\section{Rigidity and unlikely instersections}
\label{mainsec}

We first recall and prove theorem B.

\begin{theo}
\label{thAbis}
If $F$ is a formal group over $\OO_K$ and if $h(X_1,\hdots,X_d) \in \OO_K \dcroc{X_1,\hdots,X_d}$ is such that $h(0)=0$ and $h(z) \in \Tors(F)$ for all $z$ in a subset $Z$ of $\Tors(F)^d$ that is  Zariski dense in $\MM_{\Cp}^d$, then there exists $h_1,\hdots,h_d \in \End(F)$ such that $h=h_1(X_1) \oplus \cdots \oplus h_d(X_d)$.
\end{theo}

\begin{proof}
Since $\Tors(F)$ is infinite, $F$ is of finite height. By corollary \ref{scalopen}, there exists $\sigma \in \Gal(\Kalg/K)$ and a stable endomorphism $u$ of $F$ such that $\sigma(z) = u(z)$ for all $z \in \Tors(F)$. If $z \in Z$, then we have $\sigma(h(z))=u(h(z))$ as well as $\sigma(h(z)) = h (\sigma(z)) = h(u(z))$. The power series $u \circ h - h \circ u$ therefore vanishes on $Z$. Since $Z$ is Zariski dense in $\MM_{\Cp}^d$, we have $u \circ h = h \circ u$. The theorem now follows from corollary \ref{comfg}.
\end{proof}

\begin{rema}
\label{zardens}
If $Y_1,\hdots,Y_d$ are infinite subsets of $\Tors(F)$, then $Y_1 \times \cdots \times Y_d$ is Zariski dense in $\MM_{\Cp}^d$.
\end{rema}

We now recall and prove theorem A. 

\begin{theo}
\label{thBbis}
If $F$ and $G$ are two formal groups over $\OO_K$ and if $\Tors(F) \cap \Tors(G)$ is infinite, then $F=G$.
\end{theo}

\begin{proof}
By corollary \ref{scalopen}, there exists an element $\sigma \in \Gal(\Kalg/K)$ and a stable endomorphism $u$ of $F$ such that $\sigma(z) = u(z)$ for all $z \in \Tors(F)$. The set $\Lambda = \Tors(F) \cap \Tors(G)$ is stable under the action of $\Gal(\Kalg/K)$. If $z \in \Lambda$, we therefore have $\sigma(z) \in \Lambda$ and hence $u(z) \in \Tors(G)$ for all $z \in \Lambda$, since $u(z) = \sigma(z)$. By theorem B applied to $G$, we get that $u \in \End(G)$. The power series $F$ and $G$ commute with $u$ and have the same linear terms, hence $F=G$ by theorem \ref{comgen}.
\end{proof}

\section{Generalizations and perspectives}
\label{beyondsec}

\subsection{Universal bounds}
In \S 4 of \cite{BT07}, Bogomolov and Tschinkel prove that two nonisomorphic elliptic curves over $\Qbar$ have only finitely many torsion points in common. In \cite{BFT}, the authors raise the question of the existence of a universal bound for the maximum number of torsion points that two nonisomorphic elliptic curves over $\overline{\QQ}$ (or even over the complex numbers) can share. The same kind of question is raised, for preperiodic points of rational fractions, in the forthcoming paper \cite{DKY}.

The following proposition shows that in our situation, there is no  straightforward refinement of theorem A.

\begin{prop}
\label{manytors}
For all $m \geq 1$, there exists a formal group $F$ over $\Zp$, of height $1$, such that $F$ is not isomorphic to $\Gm$ but such that $\Tors(F) \cap \Tors(\Gm)$ contains at least $m$ points.
\end{prop}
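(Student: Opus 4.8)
The plan is to reduce, for a given $m$, to constructing a height-$1$ formal group $F$ over $\Zp$ with $F\not\cong\Gm$ and $\Gm[p^k]\subseteq\Tors(F)$, where $k\geq 1$ is fixed with $p^k\geq m$. Since $\Gm[p^k]=\mu_{p^k}-1$ lies in $\Tors(\Gm)$ and has $p^k\geq m$ elements, such an $F$ satisfies $\#\bigl(\Tors(F)\cap\Tors(\Gm)\bigr)\geq m$. I would look for $F$ in the isomorphism class of the unramified twist of $\Gm$ by a unit $u\in\Zp^\times$, that is, among the height-$1$ formal groups over $\Zp$ whose Tate module affords the character $\chi\cdot\eta_u$, where $\chi$ is the cyclotomic character and $\eta_u$ is the unramified character with $\eta_u(\mathrm{Frob})=u$. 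If $u\neq 1$ then $F\not\cong\Gm$, since an isomorphism would induce an isomorphism $T_pF\cong T_p\Gm=\Zp(1)$ of Galois modules, forcing $\chi\cdot\eta_u=\chi$ and $u=1$. The real question is whether, for some $u\neq 1$ close enough to $1$, the class of $\eta_u$-twists contains a representative $F$ with $\Gm[p^k]$ equal to $\mu_{p^k}-1$ as a subset of $\MM_{\Cp}$; I expect this to hold exactly when $u\equiv 1\bmod p^k$.

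I would produce such an $F$ from its logarithm. A direct computation gives a functional equation $\log(1+X)=g_0(X)+\tfrac1p\log(1+X^p)$ with $g_0\in X+X^2\cdot\Zp\dcroc{X}$, and by Honda's functional equation lemma, for every $u\in\Zp^\times$ and every $g\in X+X^2\cdot\Zp\dcroc{X}$ the series $\ell_{g,u}(X):=\sum_{i\geq 0}u^i p^{-i}\,g(X^{p^i})$ — the unique solution of $\ell=g+\tfrac up\ell(X^p)$ — is the logarithm of a height-$1$ formal group $F_{g,u}$ over $\Zp$ with Tate character $\chi\cdot\eta_u$; moreover $F_{g_0,1}=\Gm$. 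Since $\Tors(F_{g,u})=\{z\in\MM_{\Cp}:\ell_{g,u}(z)=0\}$ and $\log(1+z)=0$ on $\mu_{p^k}-1$, it is enough to fix $u\neq 1$ with $u\equiv 1\bmod p^k$ and then choose $g$ so that $\varepsilon(X):=\ell_{g,u}(X)-\log(1+X)$ vanishes on $\mu_{p^k}-1$; one then takes $F:=F_{g,u}$.

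The hard part — and the step I would have to do carefully — is the choice of $g$. Vanishing of $\varepsilon$ on the Galois-stable set $\mu_{p^k}-1$ is a finite system of linear conditions on the coefficients of $g$, defined over $\Zp$, and the obstruction to solving it starting from $g=g_0$ is measured by $\sum_{i\geq 0}(u^i-1)p^{-i}\,g_0(X^{p^i})$ on $\mu_{p^k}-1$. Here the hypothesis $u\equiv 1\bmod p^k$ is precisely what makes the coefficients $(u^i-1)p^{-i}$ integral in the range $i\leq k$ and controls the tail, which should allow one to correct $g_0$ to a genuine solution $g\in X+X^2\cdot\Zp\dcroc{X}$. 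Equivalently, in the twist picture, one must check that the descent datum for $\eta_u$ can be chosen so that the $\Cp$-isomorphism $\psi\colon F_{\Cp}\to(\Gm)_{\Cp}$ satisfies $\psi(X)\equiv X\bmod\bigl((1+X)^{p^k}-1\bigr)$ (i.e. fixes $\mu_{p^k}-1$ pointwise), the compatibility of this congruence with the twisting cocycle being again exactly $u\equiv 1\bmod p^k$. Granting this, $F=F_{g,u}$ has $\mu_{p^k}-1\subseteq\Tors(F)$, hence $\Tors(F)\cap\Tors(\Gm)\supseteq\mu_{p^k}-1$ has at least $p^k\geq m$ elements, while $F\not\cong\Gm$ because $u\neq 1$; this would complete the proof.
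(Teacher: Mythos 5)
Your reduction is sound in outline, and it is in fact the same family of groups the paper uses: the unramified twists of $\Gm$ over $\Zp$ are exactly the Lubin--Tate groups attached to the uniformizers $pu^{\pm 1}$ of $\Qp$, and your non-isomorphy argument via the Tate character is fine. The problem is that the heart of the proposition is precisely the existence statement you leave unproved: that for some $u \equiv 1 \bmod p^k$, $u \neq 1$, one can actually choose $g \in X + X^2 \cdot \Zp\dcroc{X}$ with $\ell_{g,u}$ vanishing on $\mu_{p^k}-1$. What you offer for this is only a heuristic (``the obstruction $\sum_{i\geq 1}(u^i-1)p^{-i}g_0(X^{p^i})$ is controlled, which should allow one to correct $g_0$''). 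You never show that the finite system of linear conditions can be solved by a correction $\delta \in X^2 \cdot \Zp\dcroc{X}$ --- with integral coefficients, no linear term, and compatibly with the Galois action at the points $\zeta_{p^j}-1$, $1 \leq j \leq k$. Bounding the target values is not enough: one needs surjectivity of $\delta \mapsto \bigl(\ell_{\delta,u}(\zeta_{p^j}-1)\bigr)_{1 \leq j \leq k}$ onto the relevant tuples, and you do not address this (your own ``Granting this\dots'' concedes the point). The same gap appears in your alternative descent formulation, where the compatibility of the congruence on $\psi$ with the cocycle is asserted, not checked. So as written the proof is incomplete at its only nontrivial step.

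For comparison, the paper avoids any interpolation or descent argument by writing down the group explicitly through its Lubin--Tate Frobenius series rather than its logarithm: with $q(X)=(1+X)^p-1$ and $f(X)=\bigl(1+\tfrac{(1+X)^{p^n}-1}{X}\bigr)q(X)$, one has $f(X)=p(1+p^n)X+\bigO(X^2)$ and $f(X)\equiv X^p \bmod p$, so $f$ is a Lubin--Tate series for the uniformizer $p(1+p^n)$ and the associated formal group $F$ has $[p(1+p^n)]=f$. Since the first factor equals $1$ at $X=\zeta_{p^j}-1$ for $j \leq n$, one sees directly that $f(\zeta_{p^j}-1)=\zeta_{p^{j-1}}-1$ and $f(\zeta_p-1)=0$, so $\mu_{p^n}-1 \subseteq \Tors(F)$, while $F \not\cong \Gm$ because a Lubin--Tate group determines its uniformizer. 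If you want to keep your logarithm framework, the cleanest repair is to produce $g$ (equivalently $F$) explicitly in this way, rather than by an obstruction count whose vanishing you have not verified.
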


\begin{proof}
Take $n \geq 1$ and let $q(X)=(1+X)^p-1$ and $u(X)=1+((1+X)^{p^n}-1)/X$ and $f(X)=u(X)q(X)$. We have $f(X)=p(1+p^n)X + \bigO(X^2)$ and $f(X) \equiv X^p \bmod{p}$. 

By Lubin-Tate theory (see \S 1 of \cite{LT65}) there exists a formal group $F$ such that $[p(1+p^n)](X) = f(X)$. This group is attached to the uniformizer $p(1+p^n)$ of $\Qp$. Likewise, $\Gm$ is attached to $p$. The formal group $F$ is not isomorphic to $\Gm$ over $\Qp$ as $p \neq p(1+p^n)$ and any Lubin-Tate group attached to a uniformizer $\pi$ determines $\pi$. 

However, we have $f(\zeta_p-1)=0$ and $f(\zeta_{p^k}-1)=\zeta_{p^{k-1}}-1$ for all $k \leq n$, so that $\zeta_{p^k}-1 \in \Tors(F)$ for all $k \leq n$. This proves the proposition.
\end{proof}

If $\Tors(F) \cap \Tors(G)$ is large, then are $F$ and $G$  close to each other in some sense?

\subsection{The logarithm of a formal group}
Using the logarithms of formals groups, we can give a very short proof of a weaker form of theorem A, namely: if $\Tors(F)=\Tors(G)$ (and this common set is infinite), then $F=G$. Indeed, $\Log_F$ is holomorphic on $\MM_{\Cp}$ and its zeroes are precisely the elements of $\Tors(F)$, with multiplicity $1$. In addition, $\Log_F'$ is a bounded power series since $\mathrm{d} \Log_F$ is the normalized invariant differential on $F$. If $\Tors(F)=\Tors(G)$, then $\Log_F$ and $\Log_G$ have the same zeroes, so that they differ by a unit $u$. A unit is necessarily bounded. We have $\Log_G = u \cdot \Log_F$ and hence $\Log_G' = u \cdot \Log_F' + u' \cdot \Log_F$. Since $\Log_G'$ and $\Log_F'$ and $u$ are bounded, but not $\Log_F$, we must have $u'=0$ (the sup norms $\|{\cdot}\|_r$ on circles are multiplicative). This implies that $u \in \OO_K^\times$ and then that $u=1$ since $\Log_F'(0)=\Log_G'(0)=1$, so that $\Log_F = \Log_G$ and $F=G$. The same argument gives the following characterization of the logarithm of a formal group of finite height.

\begin{prop}
\label{uniqlog}
If $F$ is a formal group of finite height, then the power series $\Log_F$ is the unique element of $X +X^2 \cdot K\dcroc{X}$ that is holomorphic on $\MM_{\Cp}$, whose zero set is precisely $\Tors(F)$, with multiplicity $1$, and whose derivative is bounded.
\end{prop}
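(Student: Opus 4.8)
The plan is to repeat, with $\Log_G$ replaced by an arbitrary competitor $L$, the argument of the paragraph preceding the statement. So let $L(X) \in X + X^2 \cdot K\dcroc{X}$ be holomorphic on $\MM_{\Cp}$, with zero set precisely $\Tors(F)$ and every zero simple, and with $L'$ bounded; the goal is to show $L = \Log_F$.

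First I would assemble the properties of $\Log_F$ that the argument needs: it lies in $X + X^2 \cdot K\dcroc{X}$; it is holomorphic on $\MM_{\Cp}$ with zero set $\Tors(F)$, all zeros simple; its derivative $\Log_F'$ is the normalized invariant differential, hence lies in $\OO_K\dcroc{X}$ and is bounded; and, since $F$ has finite height and so $\Tors(F)$ is infinite, the power series $\Log_F$ is \emph{unbounded}, i.e. $\|\Log_F\|_r \to \infty$ as $r \to 1^-$. This last point is exactly the input used, in the guise of ``but not $\Log_F$'', in the preceding paragraph: the Newton polygon of $\Log_F$ picks up, for each $n \geq 1$, a segment of horizontal length $\#\bigl(F[p^n] \setminus F[p^{n-1}]\bigr)$ whose slope is minus the valuation of a primitive $p^n$-torsion point, and the resulting total decrease of the valuations of the coefficients is infinite.

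Next I set $u := L/\Log_F$. Since $L/X$ and $\Log_F/X$ are units of $K\dcroc{X}$, this is a well-defined element of $K\dcroc{X}$ with $u(0) = 1$, and $L = u \cdot \Log_F$. Because $L$ and $\Log_F$ are holomorphic on $\MM_{\Cp}$ with the \emph{same} divisor of zeros (same set, all multiplicities $1$), $u$ defines a holomorphic function on $\MM_{\Cp}$ with no zeros, and therefore $\|u\|_r = |u(0)| = 1$ for every $r < 1$; in particular $u$ is bounded. Differentiating $L = u \cdot \Log_F$ yields $u' \cdot \Log_F = L' - u \cdot \Log_F'$, and the right-hand side is bounded because $L'$ is bounded by hypothesis, $u$ is bounded, and $\Log_F' \in \OO_K\dcroc{X}$. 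If $u' \neq 0$, fix any $r_0 \in (0,1)$; then $\|u'\|_r \geq \|u'\|_{r_0} > 0$ for all $r \in [r_0, 1)$, so using that the norms $\|\cdot\|_r$ are multiplicative we get $\|u' \cdot \Log_F\|_r = \|u'\|_r \cdot \|\Log_F\|_r \to \infty$ as $r \to 1^-$, contradicting the boundedness of $L' - u \cdot \Log_F'$. Hence $u' = 0$, so $u$ equals the constant $u(0) = 1$ and $L = \Log_F$.

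The only ingredient that is not a formal consequence of the multiplicativity and the monotonicity in $r$ of the norms $\|\cdot\|_r$, together with standard facts about zeros of holomorphic functions on $\MM_{\Cp}$ (Weierstrass preparation), is the unboundedness of $\Log_F$ for $F$ of finite height; since the argument in the preceding paragraph already relies on it, I would invoke it rather than reprove the Newton-polygon estimate, and so I do not expect a serious obstacle.
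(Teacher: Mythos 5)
Your proof is correct and follows essentially the same route as the paper: write the competitor as $u \cdot \Log_F$ with $u$ a non-vanishing (hence bounded) holomorphic function, differentiate, and use multiplicativity of the norms $\|\cdot\|_r$ together with the unboundedness of $\Log_F$ to force $u'=0$ and then $u=1$. The only addition is your Newton-polygon justification that $\Log_F$ is unbounded (minor caveat: primitive $p^n$-torsion points need not share a single valuation, so one should speak of segments of total horizontal length $\#(F[p^n]\setminus F[p^{n-1}])$ and total vertical drop $v(p)$ per level, or simply note that a bounded series in $K\dcroc{X}$ has finitely many zeros by Weierstrass preparation), a fact the paper uses without comment.
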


\subsection{More rigidity}
A common generalization of theorems A and B would be the assertion that if a power series $h$ maps infinitely many torsion points of $F$ to torsion points of $G$, then $h \in \Hom(F,G)$. In order to prove this using the same method as in the proof of theorem B, we would need to show that there exists $\sigma \in \Gal(\Kalg/K)$ that acts on $\Tors(F)$ and $\Tors(G)$ by two power series $u_F$ and $u_G$, satisfying some stability condition. If $G$ is a  Lubin-Tate formal group (for some finite extension of $\Qp$ contained in $K$), there is a character $\chi_G : \Gal(\Kalg/K) \to \OO_K^\times$ such that $\sigma(z)=[\chi_G(\sigma)](z)$ for all $z \in \Tors(G)$ (theorem 2 of \cite{LT65}).

\begin{theo}
\label{thBter}
If $F$ is a formal group and $G$ is a Lubin-Tate formal group, both defined over $\OO_K$, and if $h(X) \in X \cdot \OO_K \dcroc{X}$ is such that $h'(0) \neq 0$ and $h(z) \in \Tors(G)$ for infinitely many $z \in \Tors(F)$, then $h \in \Hom(F,G)$.
\end{theo}

\begin{proof}
Since $\Tors(F)$ is infinite, $F$ is of finite height. By corollary \ref{scalopen}, there exists an element $\sigma \in \Gal(\Kalg/K)$ and a stable endomorphism $u_F$ of $F$ such that $\sigma(z_F) = u_F(z_F)$ if $z_F \in \Tors(F)$. Let $u_G(X) = [\chi_G(\sigma)](X)$, so that $\sigma(z_G) = u_G(z_G)$ if $z_G \in \Tors(G)$. 

If $z \in \Tors(F)$ is such that $h(z) \in \Tors(G)$, then $\sigma(h(z))=u_G(h(z))$ and $\sigma(h(z)) = h (\sigma(z)) = h(u_F(z))$. The power series $u_G \circ h - h \circ u_F$ therefore vanishes at infinitely many points of $\MM_{\Cp}$, so that $u_G \circ h = h \circ u_F$. Since $h'(0) \neq 0$, we have $u_F'(0) = u_G'(0)$ and $u_G$ is stable. The theorem now follows from lemma \ref{ishomo} below.
\end{proof}

\begin{lemm}
\label{ishomo}
Let $F$ and $G$ be two formal groups and let $f$ and $g$ be endomorphisms of $F$ and $G$, with $g$ stable. If $h(X) \in X \cdot \OO_K \dcroc{X}$ is such that $h'(0) \neq 0 $ and $h \circ f = g \circ h$, then $h \in \Hom(F,G)$.
\end{lemm}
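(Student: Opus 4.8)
The plan is to imitate the proof of corollary~\ref{isendo}: transport the group law $G$ back along $h$ and then appeal to the rigidity statement of theorem~\ref{comgen}.

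First I would record that $f$ is automatically stable. Differentiating $h \circ f = g \circ h$ at $0$ and using $f(0)=h(0)=0$ gives $h'(0) f'(0) = g'(0) h'(0)$, so that $f'(0) = g'(0)$ since $h'(0) \neq 0$; as $g$ is stable, so is $f$. Moreover, since $h'(0) \neq 0$ the compositional inverse $h^{\circ -1} \in X \cdot K\dcroc{X}$ exists, and the hypothesis can be rewritten as $f = h^{\circ -1} \circ g \circ h$, i.e.\ $h$ conjugates $f$ to $g$.

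Next I would pull $G$ back along $h$, setting
\[ F'(X,Y) = h^{\circ -1}\bigl( G(h(X),h(Y)) \bigr) \in K\dcroc{X,Y}, \]
equivalently $h(F'(X,Y)) = G(h(X),h(Y))$. The point is that $F'$ commutes with $f$, in the sense that $F'(f(X),f(Y)) = f(F'(X,Y))$: applying $h$ to the left-hand side and using $h \circ f = g \circ h$ (twice) together with the fact that $g \in \End(G)$, one computes $h(F'(f(X),f(Y))) = G(g(h(X)),g(h(Y))) = g(G(h(X),h(Y))) = g(h(F'(X,Y))) = h(f(F'(X,Y)))$, and one cancels $h$ (using $h^{\circ -1}$). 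On the other hand $F$ also commutes with $f$, namely $F(f(X),f(Y)) = f(F(X,Y))$, because $f$ is an endomorphism of $F$.

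Finally, $F$ and $F'$ are two elements of $K\dcroc{X_1,X_2}$ vanishing at the origin, both commuting in the above sense with the stable power series $f$, and with the same first-order terms: $dF/dX_i(0) = 1 = dF'/dX_i(0)$ for $i=1,2$ (using $(h^{\circ -1})'(0) = 1/h'(0)$ and $dG/dX_i(0)=1$). Theorem~\ref{comgen} with $d=2$ then forces $F = F'$, which unwinds to $h(F(X,Y)) = G(h(X),h(Y))$; that is, $h \in \Hom(F,G)$. I do not expect a serious obstacle here: the only delicate points are to perform the bookkeeping in the commutation identity for $F'$ in the correct order (using both the conjugation relation $h \circ f = g \circ h$ and the endomorphism property of $g$) and to stay inside $K\dcroc{\cdot}$ so that theorem~\ref{comgen} applies; the stability of $f$, which is what makes that theorem usable, comes for free from the hypotheses.
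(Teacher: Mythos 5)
Your proof is correct and is essentially the paper's argument in mirror image: the paper sets $K(X,Y)=h\circ F(h^{\circ -1}(X),h^{\circ -1}(Y))$ and compares it with $G$ via theorem~\ref{comgen} using the stable series $g$, whereas you pull $G$ back to $F'(X,Y)=h^{\circ -1}(G(h(X),h(Y)))$ and compare with $F$ using $f$, whose stability you correctly deduce from $f'(0)=g'(0)$ (a remark the paper also makes). Both are the same conjugation-plus-rigidity argument, so no further comment is needed.
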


\begin{proof}
Consider the power series $K(X,Y) = h \circ F (h^{\circ -1}(X),h^{\circ -1}(Y))$. We have 
\[ K \circ g = h \circ F \circ h^{\circ -1} \circ g = h \circ F \circ f \circ h^{\circ -1} = h \circ f \circ F \circ h^{\circ -1} = g \circ h \circ F \circ h^{\circ -1}  = g \circ K \]
Since $K$ and $G$ commute with $g$ and have the same derivatives at $0$, we have $K=G$ by theorem \ref{comgen} and hence $h \circ F = G \circ h$, so that $h \in \Hom(F,G)$.
\end{proof}

Note that the hypothesis of the lemma imply that $f'(0)=g'(0)$ so that if one series is stable, then both are.

\subsection{Homotheties and stable $p$-adic dynamical systems}

If $F$ is a formal group of finite height, then $\End(F)$ is a set of power series that commute with each other under composition. One can forget about the formal group and study certain sets $\Dyn$ of elements of $X \cdot \OO_K \dcroc{X}$ that commute with each other under composition. This is the object of Lubin's theory of $p$-adic dynamical systems (see \cite{L94}). 

Let us say that $\Dyn \subset X \cdot \OO_K \dcroc{X}$ is a stable $p$-adic dynamical system of finite height if the elements of $\Dyn$ commute with each other under composition, and if $\Dyn$ contains a stable series $f$ such that $f'(0) \in \MM_K$ and $f(X) \not\equiv 0 \bmod{\MM_K}$ 
(i.e.\  $f$ is of finite height) as well as a stable series $u$ such that $u'(0) \in \OO_K^\times$. We can then assume that $\Dyn$ is as large as possible, namely that any power series $g \in X \cdot \OO_K \dcroc{X}$ that commutes with the elements of $\Dyn$ belongs to $\Dyn$. For example, if $F$ is a formal group of finite height, then $\End(F)$ is a stable $p$-adic dynamical system.

Given a stable $p$-adic dynamical system of finite height $\Dyn$, the set $\preper(g)$ is independent of the choice of a stable $g \in \Dyn$ (see \S 3 of \cite{L94}). One can then define $\preper(\Dyn)$ as the preperiodic set of any stable element of $\Dyn$. To what extent does $\preper(\Dyn)$ determine a stable $p$-adic dynamical system of finite height $\Dyn$?

In order to extend our results from formal groups to stable $p$-adic dynamical systems of finite height, we can ask whether the consequence of corollary \ref{scalopen} holds in more generality: for which stable $p$-adic dynamical systems of finite height $\Dyn$ is there a stable power series $w \in \Dyn$ and an element $\sigma \in \Gal(\Kalg/K)$ such that $\sigma(z)=w(z)$ for all $z \in \preper(\Dyn)$?

\providecommand{\bysame}{\leavevmode ---\ }
\providecommand{\og}{``}
\providecommand{\fg}{''}
\providecommand{\smfandname}{\&}
\providecommand{\smfedsname}{\'eds.}
\providecommand{\smfedname}{\'ed.}
\providecommand{\smfmastersthesisname}{M\'emoire}
\providecommand{\smfphdthesisname}{Th\`ese}

\end{document}